\documentclass[12pt]{article}
\usepackage{amsmath,amssymb,amsthm}
\usepackage{mathabx}
\sloppy
\usepackage[active]{srcltx}

\usepackage[all]{xy}
\usepackage[active]{srcltx}

\newtheorem{theorem}{Theorem}[section]
\newtheorem{lemma}{Lemma}[section]

\theoremstyle{definition}
\newtheorem{remark}{Remark}[section]
\newtheorem{example}{Example}[section]
\newtheorem{definition}{Definition}[section]
\newtheorem{algorithm}{Algorithm}[section]

\renewcommand{\le}{\leqslant}
\renewcommand{\ge}{\geqslant}

\DeclareMathOperator{\rank}{rank}

\DeclareMathOperator{\diag}{diag}

\begin{document}

\title{A canonical form for
nonderogatory matrices under unitary
similarity\thanks{Linear Algebra Appl.
435 (2011) 830--841.}}

\author{Vyacheslav Futorny\thanks{Department of
Mathematics, University of S\~ao Paulo,
Brazil. Email:
\mbox{futorny@ime.usp.br}. Supported in
part by the CNPq grant (301743/2007-0)
and by the Fapesp grant
(2010/50347-9).}
   \and
Roger A. Horn\thanks{Department of
Mathematics, University of Utah, Salt
Lake City, Utah 84103, Email:
\mbox{rhorn@math.utah.edu}.}
    \and
Vladimir V.
Sergeichuk\thanks{Corresponding author.
Institute of Mathematics,
Tereshchenkivska 3, Kiev, Ukraine.
Email: \mbox{sergeich@imath.kiev.ua}.
Supported in part by the Fapesp grant
(2010/07278-6). The work was done while
this author was visiting the University
of S\~ao Paulo, whose hospitality is
gratefully acknowledged.}}

\date{}

\maketitle

\begin{abstract}
A square matrix is nonderogatory
if its Jordan blocks have distinct
eigenvalues. We give canonical forms for
\begin{itemize}
  \item nonderogatory complex
      matrices up to unitary
      similarity, and
  \item pairs of
      complex matrices up to
      similarity, in which one
      matrix has distinct
      eigenvalues.
\end{itemize}
The types of these canonical forms are given
by undirected and, respectively,
directed graphs with no undirected cycles.

{\it AMS classification:} 15A21

{\it Keywords:} Belitskii's algorithm;
Littlewood's algorithm; Unitary
similarity; Classification; Canonical
matrices
 \end{abstract}

\section{Introduction}
A square matrix is
\emph{nonderogatory} if its Jordan
blocks have distinct eigenvalues; that
is, if its characteristic and minimal
polynomials coincide.

We give canonical forms for
\begin{itemize}
  \item nonderogatory matrices up
      to unitary similarity, and
  \item pairs of
      matrices up to similarity, in
      which one matrix has
      distinct eigenvalues.
\end{itemize}
All matrices that we consider are
complex matrices.

Our canonical matrices are special
cases of the canonical matrices that
were algorithmically constructed by
Littlewood and Belitskii:
\begin{itemize}
  \item Littlewood \cite{lit}
      developed an algorithm that
      reduces each square matrix
      $M$ by
      unitary similarity transformations
\begin{equation*}\label{hec}
M\mapsto U^{-1}MU,\qquad U\text{ is a unitary matrix},
\end{equation*}
      to a matrix
      $M_{\text{can}}$ in such a
      way that $M$ and $N$ are
      unitarily similar if and only
      if they are reduced to the
      same matrix
      $M_{\text{can}}=N_{\text{can}}$.
      Thus, the matrices that are
      not changed by Littlewood's
      algorithm are \emph{canonical
      with respect to unitary
      similarity}. Other versions
     of Littlewood's algorithm were
     given in \cite{ben-cra} and
     \cite{ser_fun,ser_unit}.

  \item Belitskii \cite{bel,bel1}
      developed an algorithm that
      reduces each pair of $n\times
      n$ matrices $(M,N)$ by
      similarity transformations
\begin{equation}\label{hep}
(M,N)\mapsto (S^{-1}MS,S^{-1}NS),
\qquad S\text{ is nonsingular},
\end{equation}
      to a matrix pair
      $(M,N)_{\text{can}}$ in such
      a way that $(M,N)$ and
      $(M',N')$ are similar if and
      only if they are reduced to
      the same matrix pair
      $(M,N)_{\text{can}}=(M',N')_{\text{can}}$.
      Thus, the matrix pairs that
      are not changed by
      Belitskii's algorithm are
      \emph{canonical with respect
      to similarity}. Belitskii's
      algorithm was extended in
      \cite{ser_can} to the problem
      of classifying arbitrary
      systems of linear mappings
and the problem of classifying
representations of finite
dimensional algebras.
\end{itemize}

Lists of Littlewood's canonical
$5\times 5$ matrices and Belitskii's
canonical pairs for $4\times 4$ matrices
are in \cite{klim} and
\cite{g-s}.  Without restrictions on
the size of matrices, we cannot expect
to have explicit descriptions of Littlewood's
canonical matrices and Belitskii's
canonical matrix pairs since
\begin{itemize}
  \item The problem of classifying
      matrices up to unitary
      similarity contains the
      problem of classifying
      arbitrary systems of linear
      mappings on unitary spaces
      \cite{kr_sam,ser_unit}; and

  \item The problem of classifying
      matrix pairs up to similarity
      contains the problem of
      classifying arbitrary systems
      of linear mappings on vector
      spaces
      \cite{gel-pon,bel-ser_compl}.
\end{itemize}

When it is applied to
nonderogatory matrices, Littlewood's
algorithm can be greatly simplified.
Mitchell \cite{mit} presented an
algorithm intended to reduce nonderogatory
matrices to canonical form, but his
algorithm is incorrect\footnote{The
following reasoning on page 71 of
\cite{mit} is incorrect: ``Let us agree
to go from left to right down the
successive diagonals below the main
diagonal and pick out each non-zero
element as we come to it until we
obtain either a total of $n-1$ non-zero
elements or all non-zero elements off
the main diagonal, where $n$ is the
order of the matrix. These chosen
non-zero elements can then be made
positive by transforming by a diagonal
unitary matrix.'' Unfortunately, it is impossible to
make positive in this way ``each non-zero element
as we come to it''. In Section \ref{s3}
we choose a set of nonzero elements
that can be made positive.}. In
Sections \ref{s2}--\ref{s5} we give
a version of Littlewood's
algorithm for nonderogatory
matrices and describe a set of
canonical nonderogatory matrices for
unitary similarity. Each type of
canonical nonderogatory matrices with
$t$ distinct eigenvalues is given by an undirected
graph with $t$ vertices and no
cycles.

When it is applied to pairs of
$n\times n$ matrices in which one matrix has
distinct eigenvalues,
Belitskii's algorithm can also be greatly simplified.
 In Section
\ref{s6} we describe a
set of canonical forms for pairs of matrices under similarity.
It is analogous to the set of canonical
nonderogatory matrices in Section
\ref{s4}, but it involves directed graphs
instead of undirected graphs. This
description was used in \cite{g-s} to
classify pairs of $4\times 4$
matrices up to similarity.

\section{Schur's triangular form for
nonderogatory matrices}\label{s2}

Schur's unitary triangularization
theorem \cite[Theorem 2.3.1]{HJ1} ensures that each
square matrix $M$ is unitarily similar
to an upper triangular matrix
\begin{equation}\label{rkr}
A=\begin{bmatrix}
     \lambda _1&a_{12}&\dots&a_{1n} \\
     &\lambda _2&\ddots&\vdots \\
     &&\ddots&a_{n-1,n}\\ 0&&&\lambda _n
   \end{bmatrix},\quad \lambda_1\preccurlyeq
\lambda_2\preccurlyeq\dots\preccurlyeq
\lambda_n,
\end{equation}
whose diagonal entries are complex
numbers in any prescribed order; for
definiteness, we use the lexicographic
order:
\begin{equation}\label{gep}
\text{$a+bi\preccurlyeq c+di$\qquad if either $a<c$,
or $a=c$ and $b\le d$.}
\end{equation}

A unitary matrix $U$ that
transforms $M$ to an upper triangular matrix $A=U^{-1}MU$
of the form (\ref{rkr}) can be
constructed as follows: first find a nonsingular matrix
$S$ such that $J=S^{-1}MS$ is the
Jordan form of $M$ that has diagonal entries in
the prescribed order, then apply the
Gram-Schmidt orthogonalization to the
columns of $S$ and obtain a
unitary matrix $U=ST$, in which $T$ is
upper triangular. Alternatively, a unitary $U$ with the desired
property can be constructed directly, without first obtaining the Jordan
form \cite[Theorem 2.3.1]{HJ1}.

The unitary similarity class of $M$ can contain more than one
upper triangular matrix $A$ of the form (\ref{rkr}).
For example, the argument of any nonzero entry in the first
superdiagonal may be chosen arbitrarily.
The following diagonal unitary similarity permits us to
standardize the choice of these arguments by replacing every nonzero entry
$a_{i,i+1}$ in the first superdiagonal by the nonnegative real
number $r_i:=|a_{i,i+1}|$:
\[
A\mapsto UAU^{-1},\qquad U:=\diag(1,\, u_{1},\,
u_{1} u_{2},\,u_{1} u_{2}u_3,\,\dots),
\]
in which $u_i:=a_{i,i+1}/r_i$ if
$a_{i,i+1}\ne 0$ and $u_i:=1$ if
$a_{i,i+1}=0$. This unitary similarity
is used in the following example.

\begin{example}\label{ob1}
Every square matrix $M$ that is
unitarily similar to a matrix of the
form \eqref{rkr}, in which all entries
of the first superdiagonal of $A$ are
nonzero, is unitarily similar to a
matrix of the form
\begin{equation}\label{rj}
B=\begin{bmatrix}
     \lambda _1&b_{12}&\dots&b_{1n} \\
     &\lambda _2&\ddots&\vdots \\
     &&\ddots&b_{n-1,n}\\ 0&&&\lambda _n
   \end{bmatrix},\quad\begin{matrix}
      \text{all }b_{i,i+1}>0,  \\
               \lambda_1\preccurlyeq
\lambda_2\preccurlyeq\dots\preccurlyeq
\lambda_n.
\end{matrix}
\end{equation}
Such a matrix can be used as a
\emph{canonical form for $M$ under
unitary similarity} since if two
matrices of the form \eqref{rj} are
unitarily similar, then they are
identical. This canonical form is a
special case of a canonical form for
nonderogatory matrices that we
construct in Section \ref{s4}.
\end{example}

The number of Jordan blocks with
eigenvalue $\lambda $ in the Jordan
form of an $n\times n$ matrix $A$ is
equal to $n-\rank (A-\lambda I_n)$.
Thus, a matrix of the form \eqref{rkr} is nonderogatory
if and only if
$ \lambda _i=\lambda_{i+1}$ implies that $a_{i,i+1}\ne 0$.
We formalize this observation in the following lemma.

\begin{lemma}\label{obser}
A matrix is nonderogatory if and only
if it is unitarily similar to a block
matrix of the form
\begin{equation}\label{krao}
A=\begin{bmatrix}
     \Lambda _1 &A_{12}&\dots&A_{1t} \\
     &\Lambda _2&\ddots&\vdots \\
     &&\ddots&A_{t-1,t}\\ 0&&&\Lambda _t
   \end{bmatrix},
   \end{equation}
in which each diagonal block $\Lambda_i$ is $m_i \times m_i$ and has the
form
\begin{equation}\label{rio}
\Lambda _i=\begin{bmatrix}
     \lambda _i &*&\dots&* \\
     &\lambda _i&\ddots&\vdots \\
     &&\ddots&*\\ 0&&&\lambda _i
   \end{bmatrix},\quad \begin{matrix}
   \text{all entries of the first
   superdiagonal}\\ \text{of $\Lambda _i$
   are positive real numbers,}
\end{matrix}
\end{equation}
and the diagonal entries are
lexicographically ordered:
$\lambda_1\prec
\lambda_2\prec\dots\prec \lambda_t.$
\end{lemma}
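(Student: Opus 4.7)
The plan is to prove the two implications separately, using the observation stated just before the lemma (namely, that a matrix of the form \eqref{rkr} is nonderogatory iff $\lambda_i=\lambda_{i+1}\Rightarrow a_{i,i+1}\ne 0$) as the main input.

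For the ``only if'' direction, I would start by invoking Schur's theorem to put $M$ in the upper triangular form \eqref{rkr} with lexicographically ordered diagonal entries. Since $M$ is nonderogatory, the observation recalled above guarantees that every first-superdiagonal entry $a_{k,k+1}$ between two equal diagonal entries is nonzero. Grouping consecutive equal diagonal entries into diagonal blocks of sizes $m_1,\ldots,m_t$ (one per distinct eigenvalue) immediately produces the block form \eqref{krao} in which each $\Lambda_i-\lambda_iI_{m_i}$ is strictly upper triangular with every first-superdiagonal entry nonzero. I would then apply the diagonal unitary similarity of Example~\ref{ob1}, taking $u_k=a_{k,k+1}/|a_{k,k+1}|$ when the position $(k,k+1)$ lies inside some $\Lambda_i$ and $u_k=1$ at inter-block positions. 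This leaves the diagonal entries and the zero blocks below the diagonal intact, merely rescales the other entries above the diagonal by unit moduli, and replaces each within-block first-superdiagonal entry by its absolute value, yielding the required positivity in \eqref{rio}.

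For the ``if'' direction, suppose $A$ has the form \eqref{krao}; I would show that each eigenvalue $\lambda_i$ has geometric multiplicity $1$, whence $A$ is nonderogatory by the Jordan-block count $n-\rank(A-\lambda_iI_n)$. Partition a hypothetical eigenvector $x=(x^{(1)},\ldots,x^{(t)})$ according to the block sizes and solve $(A-\lambda_iI_n)x=0$ by working from the bottom up. Each block $\Lambda_j-\lambda_iI_{m_j}$ with $j\ne i$ is upper triangular with nonzero diagonal, hence invertible, so the equations at indices $t,t-1,\ldots,i+1$ successively force $x^{(t)}=\cdots=x^{(i+1)}=0$. The equation at index $i$ reduces to $(\Lambda_i-\lambda_iI_{m_i})x^{(i)}=0$; since $\Lambda_i-\lambda_iI_{m_i}$ is strictly upper triangular with all first-superdiagonal entries nonzero, it has rank $m_i-1$ and one-dimensional kernel. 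The equations at indices $j<i$ then uniquely determine $x^{(j)}$ via the invertible $\Lambda_j-\lambda_iI_{m_j}$. Hence $\ker(A-\lambda_iI_n)$ is one-dimensional for every $i$.

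The only step that requires care is checking that the diagonal unitary rescaling in the ``only if'' part really preserves everything we need: the diagonal entries are fixed (since $d_k/d_k=1$), the lexicographic order is therefore untouched, the zero pattern below the diagonal is preserved because $U$ is diagonal, and each within-block first-superdiagonal entry is replaced by its positive modulus as designed. The off-diagonal blocks $A_{ij}$ and the inter-block superdiagonal entries acquire arbitrary unit-modulus factors, but the lemma imposes no constraint on them, so this causes no problem.
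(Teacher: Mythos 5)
Your proof is correct and follows the approach the paper intends: the paper states Lemma~\ref{obser} as a formalization of the observation preceding it (a matrix of form \eqref{rkr} is nonderogatory iff $\lambda_i=\lambda_{i+1}$ forces $a_{i,i+1}\ne 0$) together with the diagonal unitary rescaling of Example~\ref{ob1}, without spelling out details. Your argument supplies exactly those details --- Schur plus the observation plus the diagonal unitary for ``only if,'' and a block-by-block nullspace computation verifying geometric multiplicity one for ``if.''
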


\section{An algorithm for nonderogatory matrices}
\label{s3}

Let $M$ be a nonderogatory matrix. We
first reduce it by
unitary similarity transformations to a matrix $A$ of
the form described in Lemma
\ref{obser}. Then we reduce $A$ by
transformations $ A\mapsto
A':=U^{-1}AU$ ($U$ is unitary) that
preserve this form.

We prove in Lemma \ref{kjg} that
$A'$ has the form described in Lemma
\ref{obser} if and only if
\[
 U=u_1I_{m_1}\oplus\dots\oplus u_tI_{m_t},\qquad
 |u_1|=\dots=|u_t|=1.
\]
Thus, we reduce $A$ to
canonical form  by transformations
\begin{equation}\label{yag}
A\mapsto\begin{bmatrix}
     \Lambda _1 & u_1^{-1}u_2A_{12}&\dots&
     u_1^{-1}u_tA_{1t} \\
     &\Lambda _2&\ddots&\vdots \\
     &&\ddots&u_{t-1}^{-1}u_tA_{t-1,t}\\ 0&&&\Lambda _t
   \end{bmatrix}.
   \end{equation}
Notice that the blocks $A_{ij}$ are
multiplied by complex numbers of
modulus $1$.

We construct a set of canonical nonderogatory
matrices that includes the
canonical matrices from Example
\ref{ob1}. For this purpose, if
$A_{12}\ne 0$, then we reduce it to the
following form.

\begin{lemma}\label{obss}
Let $C=[c_{ij}]$ be a nonzero $p\times
q$ matrix. Let $c$ be the first nonzero
entry in the sequence formed by the
diagonals of $C$ starting from the
lower left:
\[
c_{p1};\ c_{p-1,1},\ c_{p2};\ c_{p-2,1},\ c_{p-1,2}
,\ c_{p3};\ \dots;\ c_{1q}.
\]
We can replace
$c$ by the positive real number
$r=|c|$ by multiplying $C$ by a complex
number of modulus $1$ . The  resulting matrix is
canonical with respect to
multiplication by complex numbers of modulus
$1$.
\end{lemma}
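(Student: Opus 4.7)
The plan is to prove Lemma \ref{obss} in two short pieces: existence of the normalization, and its uniqueness (the canonicity claim).

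For existence I would simply set $u := |c|/c$; this has $|u| = 1$, and in $uC$ the entry at the position of $c$ becomes $uc = |c| = r$, as desired. Since $u \ne 0$, multiplication by $u$ preserves the zero pattern of $C$, so the position of $r$ remains the first nonzero entry of $uC$ in the prescribed diagonal order.

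For canonicity I would suppose that $C'$ and $C''$ are both of the normalized form and satisfy $C'' = \alpha C'$ for some $\alpha$ with $|\alpha| = 1$. Because $\alpha \ne 0$, the zero patterns of $C'$ and $C''$ coincide, so the first nonzero entry in the prescribed diagonal order is in the same position in both; call these values $r' > 0$ and $r'' > 0$. Then $\alpha r' = r''$, so $\alpha = r''/r' > 0$, and combined with $|\alpha| = 1$ this forces $\alpha = 1$, hence $C'' = C'$.

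There is essentially no obstacle in the argument: the specific diagonal ordering plays no role beyond selecting a single distinguished nonzero entry of $C$, and any unambiguous rule for choosing such an entry would yield canonical forms by the same reasoning. The particular ordering used here matters only later, when the lemma is applied to the blocks $A_{ij}$ in \eqref{krao} and must be compatible with the residual symmetry \eqref{yag}.
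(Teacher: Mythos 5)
Your proof is correct. The paper states Lemma \ref{obss} without giving a proof (it is treated as evident), and your argument supplies precisely the straightforward reasoning the authors omit: existence by multiplying by $u=|c|/c$, and canonicity by observing that a unit-modulus scalar $\alpha$ relating two normalized matrices must send a positive real $r'$ to a positive real $r''$, forcing $\alpha=r''/r'>0$ and hence $\alpha=1$. Your closing observation is also apt: the specific zig-zag diagonal ordering is irrelevant to this lemma and only matters for how the normalization interacts with the hierarchy of reductions in Algorithm \ref{aall}.
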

For example, if the first nonzero
diagonal (starting from the lower left)
of $C$ is below the main diagonal, then
its canonical matrix from Lemma
\ref{obss} has the form
\begin{equation*}\label{grko}
\begin{bmatrix}
\phantom{\ddots}&\phantom{\ddots}&\phantom{\ddots}&
\phantom{\ddots}&\phantom{\ddots}&\phantom{\ddots}&
\phantom{\ddots}&\phantom{\ddots}&*
       \\
*&&&&&&&\phantom{\ddots}
       \\[-4pt]
0&\ddots
       \\[-4pt]
0&\ddots&*
       \\[-6pt]
&\ddots& 0&*
      \\[-2pt]
&&0&r&*
      \\
&&&0&*&*
       \\[-2pt]
&&&&\ddots&\ddots&\ddots
       \\[-2pt]
0&&&&&0&*&*
\end{bmatrix}
       \quad
\begin{matrix}
r\in\mathbb R,\ r>0,  \\
*\text{'s are complex numbers}.\\
\end{matrix}
\end{equation*}

We sequentially reduce the blocks
$A_{ij}$ of the matrix \eqref{krao} to
canonical form in the following order
(i.e., arranging them along the block
superdiagonals of $A$):
\begin{equation}\label{lhu}
A_{12},\ A_{23},\ \dots,\ A_{t-1,t};\ A_{13},\ A_{23},
\ \dots,
\ A_{t-2,t};\ \dots;\ A_{1t}.
\end{equation}

We begin with the block $A_{12}$. If
$A_{12}=0$, then it is not changed by
transformations of the form \eqref{yag}, and so it
is already canonical. If $A_{12}\ne 0$,
then we reduce it as in Lemma
\ref{obss}; to preserve the block
$A_{12}$ obtained, we must impose the condition
$u_1=u_2$ on the transformations
\eqref{yag}.

Then we reduce $A_{23}$ in the same way
and so on, until all blocks in
the first superdiagonal have been reduced.
We obtain a
matrix $A$ in which all nonzero blocks
in the first superdiagonal have the
form described in Lemma \ref{obss}.
This matrix is uniquely determined by
the unitary similarity class of
$A$, up to transformations of the form \eqref{yag}
that satisfy the conditions $u_i=u_{i+1}$
if $A_{i,i+1}\ne 0$;
we say that such transformations are \emph{admissible}.
It is convenient to describe these conditions by a graph
$G^{(1)}$ with vertices $1,\dots,t$ and
with edges $i\text{ --- }(i+1)$ that
correspond to all $A_{i,i+1}\ne 0$.

Next we reduce the
blocks of the second superdiagonal to canonical form.
If $A_{13}=0$ or if
$u_1=u_2=u_3$ (i.e., if $G^{(1)}$ contains the
path $1\text{ --- }2\text{ --- }3$),
then $A_{13}$ is not changed by admissible
transformations of the form \eqref{yag}; it
is already canonical. If $A_{13}\ne 0$
and $G^{(1)}$ does not contain the path $1\text{
--- }2\text{ --- }3$, then we reduce
$A_{13}$ as in Lemma \ref{obss} and add
the edge $1\text{
--- }3$ to the graph. Then we reduce
$A_{24}$ and so on until we have reduced all
blocks in the sequence \eqref{lhu}.

This algorithm can be formalized as
follows. For each graph $G$ with
vertices $1,\dots,t$, we say that
\eqref{yag} is a
\emph{$G$-transformation} if $u_i=u_j$
for all edges $i\text{ --- }j$ in $G$.

\begin{algorithm}\label{aall}
\begin{description}
  \item[]  Let $M$ be a
      nonderogatory matrix, let $A$
      be its upper triangular form
      \eqref{krao} for unitary
      similarity described in Lemma
      \ref{obser}, and let $G_0$ be
      the graph with vertices
      $1,\dots,t$ and without
      edges.

  \item[\it The first step:] We
      construct a pair $(A_1,G_1)$
      as follows. Let $A_{p_1q_1}$
      be the first nonzero block of
      $A$ in the sequence
      \eqref{lhu}. Reduce
      $A_{p_1q_1}$ as in Lemma
      \ref{obss} by transformations of the form
      \eqref{yag} and denote the resulting matrix by
      $A_1$.
      Add the edge $p_1\text{
      --- }q_1$ to $G_0$ and denote
      the resulting graph by $G_1$.

  \item[\it The $\alpha $th step
      $(\alpha \ge 2)$:] Using the
      pair $(A_{\alpha-1
      },G_{\alpha-1 })$ constructed
      at the $(\alpha-1)$st step,
      we construct
      $(A_{\alpha},G_{\alpha })$.
      Let $A_{p_{\alpha }q_{\alpha
      }}$ be the first block of
      $A_{\alpha-1 }$ that is to the right
      of $A_{p_{{\alpha
      }-1}q_{{\alpha }-1}}$ in
      \eqref{lhu} and is changed by
      $G_{{\alpha
      }-1}$-transformations (this
      means that $A_{p_{\alpha
      }q_{\alpha }}\ne 0$ and
      $G_{{\alpha }-1}$ does not contain a path
      from $p_{\alpha }$ to
      $q_{\alpha }$). We reduce
      $A_{p_{\alpha }q_{\alpha }}$
      as in Lemma \ref{obss} and
      denote the resulting matrix by
      $A_{\alpha }$. Add the edge
      $p_{\alpha }\text{
      --- }q_{\alpha }$ to
      $G_{{\alpha }-1}$ and denote
the resulting graph by  $G_{\alpha}$.

\item[\it The result:] The process
    stops at a pair $(A_r,G_r)$
    such that all blocks of $A_r$
    to the right of $A_{p_rq_r}$
    in \eqref{lhu} are not changed
    by $G_r$-transformations. The
    number $r$ of steps is less
    than $t$ since the graph $G_r$
    has $t$ vertices, $r$ edges,
    and no cycles.  Write
$M_{\text{can}}:=A_r$ and $G:=G_r$.
\end{description}
\end{algorithm}

In the proof of Theorem \ref{kie} we
show that the pair $(M_{\text{can}},G)$
is uniquely determined by the
unitary similarity class of $M$; that is,
\emph{$M_{\text{can}}$ is a canonical
form for $M$ with respect to unitary
similarity.}

\section{Canonical nonderogatory
matrices and the classification
theorem}\label{s4}

Algorithm \ref{aall} constructs a
pair $(M_{\text{can}},G)$ for each
nonderogatory matrix $M$. The structure
of $M_{\text{can}}$ is determined by
the graph $G$ as follows:
\begin{itemize}
  \item The blocks
\begin{equation}\label{ctw}
A_{p_1q_1},\ A_{p_2q_2},\ \dots,\ A_{p_rq_r}
\end{equation}
of $M_{\text{can}}$ have the form
described in Lemma \ref{obss}; they
correspond to the edges of $G$.

  \item Let $A_{ij}$ $(i<j)$ be a
      block of $M_{\text{can}}$
      that is not a member of the list in \eqref{ctw}.
      Let
      $A_{p_{\alpha}q_{\alpha}}$ be
      the nearest block in the list
      \eqref{ctw} that is to the left of
      $A_{ij}$ in \eqref{lhu}. If
      there is no such block (i.e., if
      $A_{ij}$ is to the left of
      $A_{p_1q_1}$), we put
      $\alpha: =0$.  Then
\begin{itemize}
  \item[(i)] $A_{ij}=0$ if
      $G_{\alpha }$ does not contain a path
      from $i$ to $j$, and
  \item[(ii)] $A_{ij}$ is
      arbitrary if $G_{\alpha
      }$ contains a path from $i$ to
      $j$.
\end{itemize}
\end{itemize}
The graph $G_{\alpha }$ in (i) and (ii)
can be obtained from $G$ by removing
the edges $u\text{
--- }v$ that correspond to those
$A_{uv}$ in the list \eqref{ctw} that are
reduced after
$A_{p_{\alpha}q_{\alpha}}$ if
$\alpha\ne 0$, and by removing all the
edges of $G$ if $\alpha=0$. Thus,
$A_{uv}$ is to the right of $A_{ij}$ in
\eqref{lhu}; i.e., either $v-u>j-i$, or
$v-u=j-i$ and $u>i$. Hence,
\emph{$M_{\text{\rm can}}$ is a
$G$-canonical matrix} in the sense of
the following definition.

\begin{definition}\label{ddd}
Let $G$ be an undirected graph with vertices
$1,2,\dots,t$ and no cycles. By a
\emph{$G$-canonical matrix}, we mean
a block matrix of the form
\eqref{krao} in which every diagonal
block has the form \eqref{rio},
$\lambda_1\prec
\lambda_2\prec\dots\prec \lambda_t,$
and each block $A_{ij}$ ($i<j$)
satisfies the following conditions:
\begin{itemize}
  \item $A_{ij}$ has the form
      described in Lemma \ref{obss}
      if $G$ contains
      the edge $i\text{
      --- }j$;

  \item $A_{ij}=0$ if either $G$
      contains no path from $i$ to $j$,
      or the path from $i$ to $j$
  (which is unique since $G$
     without cycles)
      contains an edge $u\text{
      --- }v$ ($u<v$) such that
\begin{itemize}
  \item either $v-u>j-i$,
  \item or $v-u=j-i$ and $u>i$;
\end{itemize}

  \item $A_{ij}$ is arbitrary,
      otherwise.
\end{itemize}
\end{definition}

\begin{example}
\begin{itemize}
  \item[(a)] Each matrix of the
      form \eqref{rj} is
      $G$-canonical with
\[
G:\qquad
\xymatrix@C=8mm@R=8mm{
{1}
\ar@{-}[r]&{2}
\ar@{-}[r]&{\cdots}\ar@{-}[r]&
{t}}
\]

  \item[(b)] Each $G$-canonical
      matrix with
\[
G:\qquad\begin{aligned}
\xymatrix@C=8mm@R=8mm{
&{5}\ar@{-}[d]\\
{2}
\ar@{-}[r]&{3}
\ar@{-}[r]&{4}\ar@{-}[r]&
{1}}
\end{aligned}
\]
has the form
\[\begin{bmatrix}
     \Lambda _1 &0&0&C_4&\convolution    \\
     &\Lambda _2&C_1&\convolution   &\convolution  \\
     &&\Lambda _3&C_2&C_3\\ &&&\Lambda _4&0\\
     0&&&&\Lambda _5
   \end{bmatrix}\]
in which
\begin{itemize}
  \item each block $\Lambda _i$
      has the form
      \eqref{rio} and
      $\lambda_1\prec
      \lambda_2\prec\lambda_3\prec\lambda_4\prec
      \lambda_5$,

  \item each block $C_i$ has
      the form described in
      Lemma
\ref{obss},
  \item the stars denote
      arbitrary blocks.
\end{itemize}
\end{itemize}

\end{example}

A $G$-canonical matrix is a
\emph{canonical nonderogatory block} if
$G$ is a tree. It follows from the
uniqueness in (b) of the next theorem
that canonical nonderogatory blocks are
indecomposable under unitary similarity,
i.e., they are not unitarily similar
to a direct sum of square matrices of
smaller sizes. Their role is
analogous to the role of Jordan blocks
in the Jordan canonical form.

\begin{theorem}\label{kie}
{\rm(a)} For each nonderogatory matrix
$M$, there is a unique undirected graph $G$ and
a unique $G$-canonical matrix
$M_{\text{\rm can}}$ such that $M$ is
unitarily similar to $M_{\text{\rm
can}}$. Thus, $M$ is unitarily
similar to $N$ if and only if $M_{\text{\rm
can}}=N_{\text{\rm can}}$.

{\rm(b)} Each nonderogatory matrix $M$
is unitarily similar to a direct sum of
canonical nonderogatory blocks. This
direct sum is uniquely determined by
$M$, up to permutation of summands.
\end{theorem}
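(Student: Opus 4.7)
For part (a), existence is by construction: apply Lemma \ref{obser} to place $M$ unitarily in the form \eqref{krao}, and then run Algorithm \ref{aall} on the resulting matrix. The output $(M_{\text{can}},G)$ satisfies the conditions of Definition \ref{ddd} by design, and $G$ has no cycles because an edge $p_\alpha\text{ --- }q_\alpha$ is added only when $G_{\alpha-1}$ contains no path between its endpoints.

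The uniqueness in (a) rests on Lemma \ref{kjg} (invoked but not yet proved in the text): every unitary $U$ carrying one matrix of form \eqref{krao} to another of the same form has block-scalar shape $U=u_1I_{m_1}\oplus\cdots\oplus u_tI_{m_t}$ with $|u_i|=1$. I would prove this as follows: the distinct diagonal eigenvalues $\lambda_1,\dots,\lambda_t$ force $U$ to be block diagonal; each diagonal block $U_i$ must commute with the nonderogatory $\Lambda_i$ and so is a polynomial in $\Lambda_i-\lambda_iI$, hence upper triangular; a unitary upper triangular matrix has its inverse equal to its conjugate transpose and must therefore be diagonal, and combining this with the polynomial form leaves only $U_i=u_iI_{m_i}$. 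In particular, such transformations scale each off-diagonal block by a number of modulus one and so preserve all zero blocks. Given Lemma \ref{kjg}, any two upper-triangular representatives of $M$'s unitary class are related by an admissible transformation \eqref{yag}, and the algorithm exhausts the real gauge freedom in $(u_1,\dots,u_t)$ one parameter at a time, so the output is independent of the initial representative.

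For part (b) existence, let $V_1,\dots,V_K$ be the connected components of $G$. Definition \ref{ddd} forces $A_{ij}=0$ whenever $i$ and $j$ lie in different components, so a permutation of rows and columns exhibits $M_{\text{can}}$ as unitarily similar to $E_1\oplus\cdots\oplus E_K$, where $E_\nu$ is the principal submatrix indexed by $V_\nu$. Each $E_\nu$ is nonderogatory, and applying part (a) yields a canonical nonderogatory block $C_\nu$ (its graph is a tree, being a canonical form on a single connected component) unitarily similar to $E_\nu$. Hence $M$ is unitarily similar to $C_1\oplus\cdots\oplus C_K$.

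For uniqueness in (b), I would first show that every canonical nonderogatory block $C$ with tree $T$ is \emph{indecomposable} under unitary similarity: if $C$ were unitarily similar to a proper direct sum $C'\oplus C''$, then placing that sum in form \eqref{krao} by permutation and running Algorithm \ref{aall} would preserve the zero blocks across the split (admissible transformations being block-scalar by Lemma \ref{kjg}), producing a canonical form whose graph has no edges between the two halves---contradicting the connectedness of $T$. Now suppose $M$ is also unitarily similar to $D_1\oplus\cdots\oplus D_m$ with each $D_i$ a canonical nonderogatory block and $\Sigma_i\subset\{1,\dots,t\}$ the indices of its distinct eigenvalues. The same zero-preservation argument shows $G$ has no edge across different $\Sigma_i$, so each $\Sigma_i$ is a union of components of $G$. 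If some $\Sigma_i$ were a proper union $V_a\cup V_b$, then the sum of the generalized eigenspaces of $M$ indexed by $k\in\Sigma_i$ is an $M$-invariant subspace that is orthogonal both in the basis realizing $D_1\oplus\cdots\oplus D_m$ and in the basis realizing $E_1\oplus\cdots\oplus E_K$, so the restriction of $M$ to this subspace is unitarily similar to both $D_i$ and $E_a\oplus E_b$, contradicting indecomposability of $D_i$. Hence $\Sigma_i=V_{\nu(i)}$ for a unique component, and part (a) applied to the common restriction gives $D_i=C_{\nu(i)}$, proving that the decomposition is unique up to permutation of summands. The main obstacle throughout is establishing Lemma \ref{kjg} and then repeatedly exploiting its consequence that admissible transformations preserve the zero-block pattern, on which both the uniqueness half of (a) and the indecomposability in (b) critically depend.
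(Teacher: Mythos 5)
Your route for part (a) coincides with the paper's: existence is obtained by running Algorithm \ref{aall} on a Schur form, and uniqueness is reduced to Lemma \ref{kjg}, followed by a step-by-step comparison that shows the algorithm's output is independent of the initial representative. Within your sketch of Lemma \ref{kjg}, however, there is an inaccuracy you should be aware of: the relation obtained from $AU=UB$ on the diagonal blocks is the \emph{intertwining} relation $\Lambda_i U_i = U_i \Lambda_i'$, not a commutation; one cannot say $U_i$ commutes with $\Lambda_i$ (and hence is a polynomial in it) until $\Lambda_i=\Lambda_i'$ has been established, which is part of what the lemma is to prove. The paper sidesteps this by showing directly that $U_i$ is upper triangular, equating entries across $(\Lambda_i - \lambda_i I)U_i = U_i(\Lambda_i' - \lambda_i I)$ and using positivity of the first superdiagonals, then invoking unitarity to get diagonality and finally scalarity. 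A clean repair of your version is to observe that $\ker(\Lambda_i-\lambda_i I)^k$ is the span of the first $k$ coordinate vectors (because the superdiagonal is wholly nonzero), that $U_i$ must carry $\ker(\Lambda_i'-\lambda_i I)^k$ onto it, and that this forces $U_i$ upper triangular without assuming $\Lambda_i=\Lambda_i'$.

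For part (b) your argument is a genuine, more explicit alternative to the paper's. The paper disposes of (b) in one line, ``follows from Remark \ref{rrd},'' and treats indecomposability of canonical nonderogatory blocks as a \emph{corollary} of (b). You instead prove indecomposability \emph{first}, directly from part (a) and the zero-block preservation under the block-scalar gauge (Lemma \ref{kjg}), and then derive uniqueness of the direct-sum decomposition from indecomposability together with the uniqueness of generalized eigenspace decompositions and the orthogonality of both splittings. This is logically sound and in fact supplies detail that the paper elides: Remark \ref{rrd} produces one decomposition of $M_{\text{can}}$ into tree pieces, but by itself it does not rule out a second, inequivalent decomposition into canonical nonderogatory blocks; your indecomposability argument is exactly what closes that gap. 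The trade-off is that your route relies on (a) twice (once for the canonical form, once to force the disconnected graph contradiction in the indecomposability proof), while the paper's intended route uses (a) once together with the structural translation in Remark \ref{rrd}. Both are valid; yours is more self-contained and would serve a reader better.
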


\begin{remark}\label{rrd}
The direct sum in Theorem \ref{kie}(b)
is permutationally similar to
$M_{\text{\rm can}}$ and can be
obtained from it as follows: The graph $G$ is a
disjoint union of trees; denote them by
$G_1,\dots,G_s$. Let
$u_{i1}<u_{i2}<\dots<u_{it_i}$ be the
vertices of $G_i$. Let $A_i$ be the
$t_i\times t_i$ submatrix of
$M_{\text{\rm can}}$ formed by rows
$u_{i1},\dots,u_{it_i}$ and columns
$u_{i1},\dots,u_{it_i}$. Definition
\ref{ddd} ensures that the $u_{il},\!u_{jk}$ block
of $M_{\text{\rm can}}$ is zero if
$i\ne j$. Therefore, $M_{\text{\rm
can}}$ is permutationally similar to
\[
A_1\oplus A_2\oplus\dots\oplus A_s,
\]
which is the desired direct sum. Each
$A_i$ is a $G_i'$-canonical matrix,
in which $G_i'$ is the tree obtained from
$G_i$ by relabeling the vertices
$u_{i1},\dots,u_{it_i}$ with
$1,\dots,t_i$.
\end{remark}

\section{Proof of Theorem
\ref{kie}}\label{s5}

Our proof is based on the following
lemma about unitary similarity of
matrices of the form described in Lemma \ref{obser}.
This lemma was
proved in greater generality in \cite{lit}
and in \cite{mit}; we
offer a proof for the reader's
convenience.

\begin{lemma}\label{kjg}
Let
\begin{equation*}\label{kju}
A=\begin{bmatrix}
     \Lambda _1 &A_{12}&\dots&A_{1t} \\
     &\Lambda _2&\ddots&\vdots \\
     &&\ddots&A_{t-1,t}\\ 0&&&\Lambda _t
   \end{bmatrix},\qquad
B=\begin{bmatrix}
     \Lambda' _1 &B_{12}&\dots&B_{1t'} \\
     &\Lambda' _2&\ddots&\vdots \\
     &&\ddots&B_{t'-1,t'}\\ 0&&&\Lambda' _{t'}
   \end{bmatrix}
   \end{equation*}
be nonderogatory matrices of the
form described in Lemma \ref{obser}.
Assume that they are unitarily similar:
$U^{-1}AU=B$ with unitary $U$. Then
$t=t'$,
\begin{equation}\label{pos}
\Lambda_1=\Lambda'_1,\ \dots,\
 \Lambda_t=\Lambda'_t,
\end{equation}
and $U$ has the form
\begin{equation}\label{kpw}
 U=u_1I_{m_1}\oplus\dots\oplus u_tI_{m_t}
\end{equation}
in which $u_1,\dots,u_t$ are complex
numbers of modulus $1$ and the size of $\Lambda_i$
is $m_i\times m_i$ for each $i$.
\end{lemma}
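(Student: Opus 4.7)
The plan is to establish the three conclusions in sequence: first that the block parameters of $A$ and $B$ coincide, then that $U$ is block diagonal, and finally that each diagonal block of $U$ is a unimodular scalar multiple of the identity, which simultaneously forces $\Lambda_i=\Lambda'_i$.

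Since each $\Lambda_i$ is upper triangular with eigenvalue $\lambda_i$ and with strictly positive entries on its first superdiagonal, $\Lambda_i-\lambda_iI_{m_i}$ has rank $m_i-1$, so $\Lambda_i$ itself is nonderogatory with a single eigenvalue and its Jordan form is a single $m_i\times m_i$ Jordan block at $\lambda_i$. Because the distinct $\lambda_i$ are lexicographically ordered, the Jordan form of $A$ is uniquely determined by the sequence $(\lambda_1,m_1),\ldots,(\lambda_t,m_t)$, and the analogous statement holds for $B$. Since unitary similarity implies similarity, the two sequences must coincide, giving $t=t'$, $\lambda_i=\lambda'_i$, and $m_i=m'_i$ for every $i$.

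Next I would show that $U$ is block diagonal with respect to the partition $m_1+\cdots+m_t$. Partition $U=[U_{ij}]$ conformally and rewrite the hypothesis as $AU=UB$. For indices $i>j$ the $(i,j)$ block equation can be put in the form
\[
\Lambda_iU_{ij}-U_{ij}\Lambda_j=\sum_{k<j}U_{ik}B_{kj}-\sum_{k>i}A_{ik}U_{kj}.
\]
I argue by descending induction on $i-j$, starting from $(i,j)=(t,1)$ where the right-hand side is empty. At each stage the inductive hypothesis kills every term on the right, leaving a Sylvester equation $\Lambda_iU_{ij}=U_{ij}\Lambda_j$ whose diagonal blocks have disjoint spectra (since $\lambda_i\neq\lambda_j$), forcing $U_{ij}=0$. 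Hence $U$ is block upper triangular, and inspecting the diagonal blocks of $U^*U=I$ in succession from the top shows that any unitary block upper triangular matrix with square diagonal blocks is in fact block diagonal. Therefore $U=U_1\oplus\cdots\oplus U_t$ with each $U_i$ unitary of size $m_i\times m_i$.

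The main obstacle is the final step: showing that each $U_i$ is scalar and, simultaneously, that $\Lambda_i=\Lambda'_i$. The $(i,i)$ block of $AU=UB$ is $\Lambda_iU_i=U_i\Lambda'_i$, equivalently $N_iU_i=U_iN'_i$ with $N_i:=\Lambda_i-\lambda_iI_{m_i}$ and $N'_i:=\Lambda'_i-\lambda_iI_{m_i}$. Because the first superdiagonals consist of positive reals $r_1,\ldots,r_{m_i-1}$ and $r'_1,\ldots,r'_{m_i-1}$, one checks that $\ker N_i^{\,k}=\ker(N'_i)^k=\mathrm{span}(e_1,\ldots,e_k)$ for every $k$. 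The intertwining relation shows that $U_i$ maps $\ker(N'_i)^k$ into $\ker N_i^{\,k}$, so $U_ie_k\in\mathrm{span}(e_1,\ldots,e_k)$; hence $U_i$ is upper triangular, and any unitary upper triangular matrix is diagonal, say $U_i=\diag(\alpha_1,\ldots,\alpha_{m_i})$ with $|\alpha_k|=1$. Comparing the coefficients of $e_{k-1}$ on the two sides of $N_iU_ie_k=U_iN'_ie_k$ gives $\alpha_kr_{k-1}=\alpha_{k-1}r'_{k-1}$; taking absolute values yields $r_{k-1}=r'_{k-1}$, and then $\alpha_k=\alpha_{k-1}$. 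Induction on $k$ shows $U_i=\alpha_1I_{m_i}$, and substituting back into $U_i^*\Lambda_iU_i=\Lambda'_i$ yields $\Lambda'_i=\Lambda_i$, completing the proof of \eqref{pos} and \eqref{kpw}.
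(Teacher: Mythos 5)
Your proof is correct and follows the same overall architecture as the paper's: show the block parameters agree, show $U$ is block diagonal, then show each diagonal block is a unimodular scalar matrix (which forces $\Lambda_i=\Lambda_i'$). Two steps are handled differently, and the differences are worth noting. For the block-structure identification you invoke uniqueness of the Jordan form, whereas the paper simply observes that similar matrices have the same characteristic polynomial, and since both $A$ and $B$ are triangular with lexicographically ordered diagonals their main diagonals must coincide; the paper's argument is slightly more economical but yours is equally valid. The more substantive divergence is in showing that each $U_i$ is upper triangular: the paper equates scalar entries of $(A-\lambda I)U=U(B-\lambda I)$ row by row, while you observe that for the nilpotent parts $N_i,N_i'$ the kernels of all powers are the standard coordinate flags $\mathrm{span}(e_1,\ldots,e_k)$, and the intertwining $N_iU_i=U_iN_i'$ forces $U_i$ to preserve this flag. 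Your flag-preservation argument is cleaner and more conceptual; the paper's entry-chasing is more elementary and self-contained. The final step (extracting $r_{k-1}=r_{k-1}'$ and $\alpha_k=\alpha_{k-1}$ from the $(k-1,k)$ entry) is the same in both. One small slip: in your Sylvester equation $\Lambda_iU_{ij}-U_{ij}\Lambda_j=\cdots$ the second diagonal block should be $\Lambda_j'$ rather than $\Lambda_j$; this does not affect the argument since all that is used is that $\Lambda_i$ and $\Lambda_j'$ have disjoint spectra, which you have already established.
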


\begin{proof} The matrices $A$ and $B$
have the same main diagonal since they
are similar and the entries along their
main diagonals are lexicographically
ordered.  This means that $t=t'$ and
for each $i$ the diagonal blocks
$\Lambda _i$ and $\Lambda'_i$ are
$m_i\times m_i$ matrices of the form
\eqref{rio} with the same $\lambda _i$.
The proof is divided into three steps.

\emph{Step 1: Prove that $U$ has the
form
\begin{equation}\label{dwo}
U=U_1\oplus U_2\oplus\dots\oplus U_t
\end{equation}
in which every block $U_i$ is
$m_i\times m_i$.} If $t =1$ there is
nothing to prove, so assume that $t\ge
2$. Partition $U$ into blocks $U_{ij}$
of size $m_i\times m_j$. Our strategy
is to exploit the equality of
corresponding blocks of both sides of
the identity $AU = UB$.

The $t,\!1$ block of $AU$ is
$\Lambda_tU_{t1}$, and the $t,\!1$
block of $UB$ is $U_{t1}\Lambda'_1$.
Since $\lambda_t\ne\lambda _1$,
$U_{t1}=0$ is the only solution to
$\Lambda_tU_{t1}=U_{t1}\Lambda'_1$.

If $t> 2$, then the $t,\!2$ block of
$AU$ is $\Lambda_{t} U_{t2}$, and the
$t,\!2$ block of $UB$ is
$U_{t2}\Lambda'_{2}$ (since $U_{t1}
=0$); we have $\Lambda_{t} U_{t2}=
U_{t2}\Lambda'_{2}$. Since
$\lambda_{t}\ne\lambda _{2}$, we have
$U_{t2} =0$. Proceeding in this way
across the last block row of $AU = UB$,
we find that
$U_{t1},U_{t2}\dots,U_{t,t-1}$ are all
zero.

Now equate the blocks of $AU = UB$ in
positions $(t-1),\!k$ for $k
=1,2,\dots,t-2$ and conclude in the
same way that
$U_{t-1,1},U_{t-1,2},\dots,U_{t-1,t-2}$
are all zero. Working our way up the
block rows of $AU = UB$, left to right,
we conclude that $U_{ij} =0$ for all
$i>j$. Since
$U^{-1}=U^*$, it follows that $U_{ij} =0$ for all
$j>i$ and hence
\[U =
U_{11}\oplus U_{22}\oplus \dots\oplus
U_{tt}.\]
This proves \eqref{dwo} with
$U_i:=U_{ii}$.

\emph{Step 2: Prove that $U$ is
diagonal.} Since $AU = UB$, we have $t$
identities $\Lambda _{i}U_i =
U_i\Lambda'_{i},$ $i =1,\dots,t$, and
all the entries in
the first superdiagonal of each
$\Lambda_{i}$ and $\Lambda'_{i}$ are
positive real numbers. Thus, it
suffices to consider the case $t =1$.
In this case
\[
A=\begin{bmatrix}
     \lambda &a_{12}&\dots&a_{1n} \\
     &\lambda&\ddots&\vdots \\
     &&\ddots&a_{n-1,n}\\ 0&&&\lambda
   \end{bmatrix},\qquad
B=\begin{bmatrix}
\lambda &b_{12}&\dots&b_{1n} \\
     &\lambda&\ddots&\vdots \\
     &&\ddots&b_{n-1,n}\\ 0&&&\lambda
   \end{bmatrix},
\]
$a_{i,i+1}$ and $b_{i,i+1}$ are
positive real numbers for all $i
=1,\dots,n-1$, and $AU = UB$. As in
Step 1, we equate corresponding entries
of the identity
\begin{equation}\label{gep1}
(A-\lambda I_n)U =
U(B-\lambda I_n).
\end{equation}
In position $n,\! 1$ we have $0=0$. In
position $n,\! 2$ we have
$0=u_{n1}b_{12}$; since $b_{12}\ne 0$
it follows that $u_{n1} =0$. Proceeding
across the last row of \eqref{gep1}, we
obtain
\[u_{n1}=u_{n2}=\dots=u_{n,n-1}=0.\]
Working our way up the rows of
\eqref{gep1} in this fashion, left to
right, we find that $u_{ij} =0$ for all
$i>j$. Thus, $U$ is upper triangular.
Since $U$ is unitary, it is diagonal:
$U=\diag(u_1,\dots,u_n)$.

\emph{Step 3: Prove that $U=\diag(u_1,\dots,u_n)$ has the
form \eqref{kpw}.} We continue to
assume that $t=1$. Equating the entries
of $AU = UB$ in position $i,\! i +1$,
we have $a_{i,i+1}u_{i+1} =
u_{i}b_{i,i+1}$, so
$a_{i,i+1}/b_{i,i+1} = u_{i}/u_{i+1}$,
which is positive real and has modulus
one. We conclude that $u_{i}/u_{i+1}
=1$ for each $i =1,\dots,n-1$, and
hence $u_1=\dots = u_n$. This proves
\eqref{kpw}, which implies \eqref{pos}.
\end{proof}

\begin{proof}[Proof of Theorem
\ref{kie}] (a) Let $M$ be a
nonderogatory matrix. Algorithm
\ref{aall} constructs the graph $G$ and
the matrix $M_{\text{can}}$, which is
unitarily similar to $M$. As shown at
the beginning of Section \ref{s4},
$M_{\text{can}}$ is a $G$-canonical
matrix.

Let $M$ and $N$ be nonderogatory
matrices that are unitarily similar.
Our goal is to prove that Algorithm
\ref{aall} reduces them to the same
matrix $M_{\text{can}}=N_{\text{can}}$.
Following the algorithm, we first
reduce $M$ and $N$ to matrices $A$ and
$B$ of the form described in Lemma
\ref{obser}. They are unitarily
similar; that is, $U^{-1}AU=B$ for a
unitary matrix $U$. Lemma \ref{kjg} ensures that
$t=t'$, $\Lambda_i=\Lambda'_i$ for all
$i$, and $U$ has the form \eqref{kpw}.
This means that $B$ is obtained from
$A$ by a transformation of the form \eqref{yag}:
\begin{equation}\label{wig}
B_{ij}=u^{-1}_iu_jA_{ij},\qquad
 |u_i|=1,\qquad i,j=1,\dots,t.
\end{equation}

We arrange the superdiagonal blocks
$A_{ij}$ in $A$ and $B_{ij}$ in $B$
along the block superdiagonals, as in
\eqref{lhu}. By \eqref{wig}, the first
nonzero superdiagonal block of $A$ and
the first nonzero superdiagonal block
of $B$ occur in the same
position $p_1,\!q_1$. In Step 1 of
Algorithm \ref{aall}, we reduce them to
the same form described in Lemma
\ref{obss} and obtain the matrices
$A_1$ and $B_1$, in which the
$p_1,\!q_1$ blocks are equal.

In Step $\alpha$, we reduce the first
superdiagonal block of $A_{\alpha-1}$
that is changed by
$G_{\alpha-1}$-transformations, and the
first superdiagonal block of
$B_{\alpha-1}$ that is changed by
$G_{\alpha-1}$-transformations. They
occur in the same position
$p_{\alpha},\!q_{\alpha}$ and are
reduced to the same form described in
Lemma \ref{obss}. We obtain the
matrices $A_{\alpha}$ and $B_{\alpha}$,
in which the blocks in position
$p_{\alpha},\!q_{\alpha}$ coincide;
the superdiagonal blocks that precede them
coincide as well. The matrix $B_{\alpha}$
can be obtained from $A_{\alpha}$ by a
$G_{\alpha}$-transformation, which
preserves these blocks.

The process stops at a matrix $A_r$
such that none of its blocks are
changed by $G_r$-transformations. Then
$A_r=B_r$ and so
$M_{\text{can}}=N_{\text{can}}$.

(b) This statement follows from Remark
\ref{rrd}.
\end{proof}

\section{Canonical matrix pairs for similarity}
\label{s6}

Let $(M,N)$ be a pair of $n\times n$
matrices, and let $M$ have $n$ distinct
eigenvalues. In this section, we give a
canonical form for $(M,N)$ with respect
to the similarity transformations
\eqref{hep}.

The pair $(M,N)$ is similar to some
pair
\begin{equation}\label{rdt}
(A,B)=\left(\begin{bmatrix}
     \lambda _1 &&0 \\
     &\ddots& \\
     0&&\lambda _n
   \end{bmatrix}, \begin{bmatrix}
     b_{11} &\dots&b_{1n} \\
     \vdots&\ddots&\vdots \\
     b_{n1}&\dots&b_{nn}
   \end{bmatrix}\right),\quad \lambda_1\prec\dots\prec
\lambda_n,
\end{equation}
in which $\prec$ is the strict
lexicographic order on $\mathbb C$; see
\eqref{gep}.

Let \[ (A',B')=\left(\begin{bmatrix}
     \lambda' _1 &&0 \\
     &\ddots& \\
     0&&\lambda' _n
   \end{bmatrix}, \begin{bmatrix}
     b_{11}' &\dots&b'_{1n} \\
     \vdots&\ddots&\vdots \\
     b_{n1}'&\dots&b'_{nn}
   \end{bmatrix}\right),
   \quad \lambda'_1\prec\dots\prec
\lambda'_n,
\]
be another pair of this form, and let
it be similar to $(A,B)$; that is,
$(S^{-1}AS,S^{-1}BS)=(A',B')$ for some
nonsingular $S$. Then $A=A'$, $AS=SA$,
and so $S=\diag(s_1,\dots,s_n)$ in
which $s_1,\dots,s_n\in\mathbb C$.
Thus, the pair \eqref{rdt} is uniquely
determined by $(M,N)$, up to
transformations
\begin{equation}\label{fwo}
B\mapsto B'=\begin{bmatrix}
b_{11} &s_1^{-1}s_2b_{12}&\dots&s_1^{-1}s_nb_{1n} \\
s_2^{-1}s_1b_{21}&b_{22}&\dots&s_2^{-1}s_nb_{2n} \\
\vdots&\vdots&\ddots&\vdots \\
s_n^{-1}s_1b_{n1}&s_n^{-1}s_2b_{n2}&\dots&b_{nn}
   \end{bmatrix}
\end{equation}
in which $s_1,\dots,s_n$ are arbitrary
nonzero complex numbers.

\begin{example}
Suppose that a pair $(M, N)$ of
$n\times n$ matrices is similar to a
pair of the form \eqref{rdt} in which
$b_{12},b_{13},\dots,b_{1n}$ are all
nonzero. Taking $s_1=1$,
$s_2=b_{12}^{-1}$, \dots,
$s_n=b_{1n}^{-1}$ in \eqref{fwo}, we
reduce $(M,N)$ to the form
\begin{equation}\label{jpf}
\left(\begin{bmatrix}
     \lambda_1&&&0 \\
     &\lambda _2 \\
     &&\ddots\\
     0&&&\lambda _n\\
   \end{bmatrix},
\begin{bmatrix}
     *&1&\dots&1 \\
     *&*&\dots&* \\
     \vdots&\vdots&\ddots&\vdots\\
     *&*&\dots&*
   \end{bmatrix}\right),\quad \lambda_1\prec\dots\prec
\lambda_n,
   \end{equation}
in which the stars denote arbitrary
complex numbers. We can use \eqref{jpf}
as a \emph{canonical form for $(M,N)$
for similarity} since if $B$ and $B'$
in \eqref{fwo} have $1$ in positions
$1,\!k$, $k=2,3, \dots,n$, then
$s_1=\dots=s_n$, and so $B=B'$. Thus,
if pairs of the form \eqref{jpf} are
similar, then they are equal.
\end{example}

In the general case, we reduce  $B$ by
transformations of the form \eqref{fwo} using the
following algorithm. We arrange the
entries of $B$ along the rows starting
from the first; that is, $b_{ij}$
\emph{precedes} $b_{pq}$ if $(i,j)\prec
(p,q)$ with respect to the
lexicographic order. For each directed
graph $G$ with vertices $1,\dots,n$, we
say that \eqref{fwo} is a
\emph{$G$-transformation} if $s_i=s_j$
for all directed edges $i\to j$ in $G$.

\begin{algorithm}\label{alj}
\begin{description} \item[] Let
$B=[b_{ij}]$ be an $n\times n$
      matrix. Denote by $G_0$ the
      graph with vertices
      $1,\dots,n$ and without
      edges.

\item[\it The first step.] The
    entry $b_{11}$ is not changed
    by transformations of the form \eqref{fwo};
    we mark it as reduced and
    write $(B_1,G_1):=(B,G_0)$.

\item[\it The second step.] If
    $b_{12}=0$ then it is not
    changed by
    $G_1$-transfor\-mations, we
    mark $b_{12}$ as reduced
    and write
    $(B_2,G_2):=(B_1,G_1)$. If
    $b_{12}\ne 0$ then we make
    $b_{12}=1$ by
    $G_1$-transformations, add the
    directed edge $1\to 2$ to $G_1$, and
    denote by $B_2$ and $G_2$ the resulting
   matrix and directed
    graph.

\item[\it The $k$th step.] Let
    $b_{pq}$ be the $k$th entry;
    that is, $(p-1)n+q=k$. If
    $p=q$, or $b_{pq}=0$, or
    $G_{k-1}$ has an undirected
    path from $p$ to $q$, then
    $b_{pq}$ is not changed by
    $G_{k-1}$-transformations; we
    mark $b_{pq}$ as reduced
    and write
    $(B_k,G_k):=(B_{k-1},G_{k-1})$.
    Otherwise, we make $b_{pq}=1$
    by $G_{k-1}$-transformations,
    add the directed edge $p\to q$ to
    $G_{k-1}$, and denote by $B_k$
    and $G_k$ the resulting matrix
    and directed graph.

\item[\it The result.] After $n^2$
    steps, we obtain a matrix
    $B_{n^2}$, in which all entries
    have been marked as reduced. Write
    $(B_{\text{can}},G):=(B_{n^2},G_{n^2})$.
\end{description}
\end{algorithm}

Let us show that \emph{$B_{\text{can}}$
is a canonical form for $B$ with respect
to transformations of the form \eqref{fwo}}; that
is, if $B$ and $C$ are $n\times n$
matrices such that $B$ can be reduced
to $C$ by transformations of the form \eqref{fwo}
then $B_{\text{can}}=C_{\text{can}}$.
Indeed, after $k$ steps of Algorithm
\ref{alj} applied to $B$ and $C$, we
obtain the matrices $B_k$ and $C_k$ and
the same directed graph $G_k$. One can prove by
induction on $k$ that $B_k$ reduces to
$C_k$ by $G_k$-transformations, and so
the first $k$ entries of $B_k$ and
$C_k$ coincide. Taking $k=n^2$, we
obtain $B_{\text{can}}=C_{\text{can}}$.

Let $(M,N)$ be a pair of $n\times n$
matrices, and let $M$ have $n$ distinct
eigenvalues. Then $(M,N)$ is similar to
a pair $(A,B)$ of the form \eqref{rdt},
which is uniquely determined by
$(M,N)$, up to transformations of the form
\eqref{fwo}. Taking
$B=B_{\text{can}}$, we obtain the pair
$(M,N)_{\text{can}}:=(A,B_{\text{can}})$,
which is similar to $(M,N)$ and is
uniquely determined by $(M,N)$. Thus,
\begin{equation}\label{kux}
\text{\it $(M,N)_{\text{\rm can}}$ is a
canonical form for $(M,N)$ for similarity.}
\end{equation}

In the $k$th step of Algorithm
\ref{alj}, we reduce the $k$th entry
$b_{pq}$ and construct the directed graph
$G_{k}$. The graph $G_{k}$ can be also
obtained from $G=G_{n^2}$ by removing
the directed edges $i\to j$ that correspond to
those entries $b_{ij}$ that were
reduced to $1$ after $b_{pq}$; this
means that $(i,j)\succ (p,q)$. Thus,
\begin{equation}\label{ftj}
\text{\it the pair $(M,N)_{\text{can}}$ is
$G$-canonical}
\end{equation}
in the sense of the following
definition.

\begin{definition}\label{jjt}
Let $G$ be a directed graph with
vertices $1,2,\dots,n$ and no
undirected cycles. By a
\emph{$G$-canonical matrix pair} we
mean a matrix pair of the form
\begin{equation*}\label{rjt}
\left(\begin{bmatrix}
     \lambda _1 &&0 \\
     &\ddots& \\
     0&&\lambda _r
   \end{bmatrix}, \begin{bmatrix}
     b_{11} &\dots&b_{1r} \\
     \vdots&\ddots&\vdots \\
     b_{r1}&\dots&b_{rr}
   \end{bmatrix}\right),\qquad \lambda_1\prec\dots\prec
\lambda_r,
\end{equation*}
in which every entry $b_{pq}$ satisfies
the following conditions:
\begin{itemize}
  \item[(i)] $b_{pq}=1$ if $G$ has
      the directed edge $p\to q$;

  \item[(ii)] $b_{pq}=0$ if either
      $G$ has no undirected path
      from $p$ to $q$, or the
      undirected path from $p$ to
      $q$ contains a directed edge $i\to
      j$ such that $(i,j)\succ
      (p,q)$ with respect to the
      lexicographic order;

  \item[(iii)] $b_{pq}$ is
      arbitrary, otherwise.
\end{itemize}
\end{definition}

\begin{example}
\begin{itemize}
  \item[(a)] Each pair of the form
      \eqref{jpf} is $G$-canonical
      with
\[
G:\qquad\begin{aligned}
\xymatrix@C=1mm@R=0mm{&&&&{4}\ar@{<-}[dddd]
\\
&{3}\ar@{<-}[dddrrr]&&&&&{\!\!\ddots\!\!}\\\\\\
{2}\ar@{<-}[rrrr]&&&&{1}
\ar@{->}[rrrr]&&&&
{n}}
\end{aligned}
\]

  \item[(b)] Each $G$-canonical
      matrix pair with
\[
G:\qquad\begin{aligned}
\xymatrix@C=8mm@R=8mm{
&{5}\ar@{<-}[d]\\
{2}
\ar@{->}[r]&{1}
\ar@{->}[r]&{3}\ar@{<-}[r]&
{4}}
\end{aligned}
\]
has the form
\[
\left(\begin{bmatrix}
     \lambda_1&0&0&0&0 \\
     0&\lambda _2&0&0&0 \\
     0&0&\lambda _3&0&0\\
     0&0&0&\lambda _4&0\\
     0&0&0&0&\lambda _5
   \end{bmatrix},
\begin{bmatrix}
     *&0&1&0&1 \\
     1&*&*&0&* \\
     *&*&*&0&*\\
     0&0&1&*&*\\
     *&*&*&*&*
   \end{bmatrix}\right)
\]
in which $\lambda_1\prec\dots\prec
\lambda_5$ and the stars denote
arbitrary complex numbers.
\end{itemize}
\end{example}

A $G$-canonical matrix pair is an
\emph{indecomposable canonical matrix
pair} if $G$ is a tree. It is not
similar to a direct sum of pairs of
square matrices of smaller sizes. This is a consequence
of the uniqueness assertion in (b) of
the following theorem.

\begin{theorem}\label{kid}
{\rm(a)} For each pair $(M,N)$ of
$n\times n$ matrices in which $M$ has
$n$ distinct eigenvalues, there exist a
unique directed graph $G$ and a unique
$G$-canonical matrix pair
$(M,N)_{\text{\rm can}}$ such that
$(M,N)$ is similar to $(M,N)_{\text{\rm
can}}$. Thus, $(M,N)$ is similar to
$(M',N')$ if and only if $(M,N)_{\text{\rm
can}}=(M',N')_{\text{\rm can}}$.

{\rm(b)} Each pair $(M,N)$ of $n\times
n$ matrices in which $M$ has $n$
distinct eigenvalues is similar to a
direct sum of indecomposable canonical
matrix pairs. This direct sum is
uniquely determined by $(M,N)$, up to
permutation of summands.
\end{theorem}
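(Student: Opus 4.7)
The plan is to follow the same template as the proof of Theorem \ref{kie}: part (a) is essentially packaged from the algorithmic discussion already given in Section \ref{s6}, and part (b) follows by the same ``decompose across the connected components of the graph'' argument used in Remark \ref{rrd}, together with an appeal to part (a) for the uniqueness.

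For part (a), I would argue existence directly from the construction in Section \ref{s6}: since $M$ has distinct eigenvalues, $(M,N)$ is similar to some pair $(A,B)$ of the form \eqref{rdt}, in which $A$ is the (lexicographically ordered) diagonal matrix of the eigenvalues of $M$. Applying Algorithm \ref{alj} to $B$ produces a pair $(A,B_{\text{can}})$ which, by \eqref{ftj}, is $G$-canonical for the directed graph $G$ built by the algorithm. For uniqueness, if $(M,N)$ is similar to $(M',N')$, then both reduce to forms \eqref{rdt} with the same diagonal block $A$ (determined by the spectrum of $M$), and their $B$-blocks are related by a transformation of the form \eqref{fwo}; the argument at \eqref{kux} then shows that the algorithm produces the same graph $G$ and the same canonical pair, completing (a).

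For part (b), the graph $G$ produced by Algorithm \ref{alj} has no undirected cycles, because at step $k$ the algorithm adds the edge $p\to q$ only when $G_{k-1}$ has no undirected path from $p$ to $q$. Hence $G$ is a disjoint union of trees $G_1,\dots,G_s$ with vertex sets $U_1,\dots,U_s\subseteq\{1,\dots,n\}$. Definition \ref{jjt}(ii) ensures that the $(p,q)$-entry of the $B$-part of $(M,N)_{\text{can}}$ vanishes whenever $p\in U_i$ and $q\in U_j$ with $i\ne j$, since no undirected path in $G$ joins $p$ to $q$. Because the $A$-part of $(M,N)_{\text{can}}$ is diagonal, a suitable simultaneous permutation similarity therefore rearranges $(M,N)_{\text{can}}$ as a direct sum $\bigoplus_{i=1}^s (A_i,B_i)$, where $(A_i,B_i)$ is the submatrix pair indexed by $U_i$. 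After relabeling $U_i$ as $\{1,\dots,|U_i|\}$, Definition \ref{jjt} shows that $(A_i,B_i)$ is a $G_i'$-canonical matrix pair with $G_i'$ a tree, hence an indecomposable canonical matrix pair.

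For uniqueness in (b), suppose $(M,N)$ is also similar to another direct sum $\bigoplus_j (A_j',B_j')$ of indecomposable canonical pairs with associated trees $H_j$. Concatenating these trees on the appropriate vertex set produces a forest $H=\bigsqcup_j H_j$, and after the (unique) permutation that puts the eigenvalues of $\bigoplus_j A_j'$ into lexicographic order, $\bigoplus_j (A_j',B_j')$ becomes an $H$-canonical pair similar to $(M,N)$. By the uniqueness statement in part (a), this pair must coincide with $(M,N)_{\text{can}}$ and $H$ must equal $G$; consequently the multiset of indecomposable summands is determined, up to permutation, by $(M,N)$. The same argument shows indecomposability of any $G$-canonical pair with $G$ a tree, justifying the terminology used just before the theorem. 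The only slightly delicate point I foresee is keeping the bookkeeping straight when translating between the two viewpoints (direct sum with relabeled vertices versus $G$-canonical pair on the original vertex set $\{1,\dots,n\}$), but this is handled verbatim as in Remark \ref{rrd} and presents no genuine obstacle.
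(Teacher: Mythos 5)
Your proposal follows essentially the same approach as the paper: part (a) is read off from \eqref{kux} and \eqref{ftj}, and part (b) is obtained from the connected-component decomposition of Remark \ref{ry}. The extra uniqueness argument you supply for (b) --- sorting a competing direct sum of indecomposable canonical pairs lexicographically and then appealing to (a), with an order-preserving relabeling within each summand --- is the natural way to make explicit what the paper leaves implicit in its citation of the remark, and it is correct.
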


The statement (a) of Theorem \ref{kid}
follows from \eqref{kux} and \eqref{ftj}.
The statement (b) is a consequence of the
following remark.

\begin{remark}\label{ry}
The direct sum in Theorem \ref{kid}(b)
is permutationally similar to
$(M,N)_{\text{\rm can}}$ and can be
obtained from it as follows: The directed graph $G$ is a
disjoint union of trees; denote them by
$G_1,\dots,G_s$. Let
$u_{i1}<u_{i2}<\dots<u_{it_i}$ be the
vertices of $G_i$. Let $(A_i,B_i)$ be
the pair of $t_i\times t_i$ submatrices
of the matrices in $(M,N)_{\text{\rm
can}}$ formed by rows
$u_{i1},\dots,u_{it_i}$ and columns
$u_{i1},\dots,u_{it_i}$.
Definition \ref{jjt} ensures that the $u_{il},\!u_{jk}$
entries of the matrices in
$(M,N)_{\text{\rm can}}$ are zero if
$i\ne j$. Therefore, $(M,N)_{\text{\rm
can}}$ is permutationally similar to
\[
(A_1,B_1)\oplus (A_2,B_2)\oplus\dots\oplus (A_s,B_s),
\]
which is the desired direct sum. Each
$(A_i,B_i)$ is a $G_i'$-canonical
matrix pair, in which $G_i'$ is the tree
obtained from $G_i$ by relabeling its
vertices $u_{i1},\dots,u_{it_i}$ as
$1,\dots,t_i$.
\end{remark}

\end{document}